\documentclass{article}
%%%%%%%%%%%%%%%%%%%%%%%%%%%%%%%%%%%%%%%%%%%%%%%%%%%%%%%%%%%%%%%%%%%%%%%%%%%%%%%%%%%%%%%%%%%%%%%%%%%%%%%%%%%%%%%%%%%%%%%%%%%%%%%%%%%%%%%%%%%%%%%%%%%%%%%%%%%%%%%%%%%%%%%%%%%%%%%%%%%%%%%%%%%%%%%%%%%%%%%%%%%%%%%%%%%%%%%%%%%%%%%%%%%%%%%%%%%%%%%%%%%%%%%%%%%%
%TCIDATA{OutputFilter=LATEX.DLL}
%TCIDATA{Version=5.50.0.2960}
%TCIDATA{<META NAME="SaveForMode" CONTENT="1">}
%TCIDATA{BibliographyScheme=Manual}
%TCIDATA{Created=Saturday, June 25, 2011 09:22:49}
%TCIDATA{LastRevised=Tuesday, December 20, 2011 13:19:39}
%TCIDATA{<META NAME="GraphicsSave" CONTENT="32">}
%TCIDATA{<META NAME="DocumentShell" CONTENT="Standard LaTeX\Blank - Standard LaTeX Article">}
%TCIDATA{CSTFile=40 LaTeX article.cst}

\newtheorem{theorem}{Theorem}

\newenvironment{proof}[1][Proof]{\noindent\textbf{#1.} }{\ \rule{0.5em}{0.5em}}
\input{tcilatex}
\addtolength{\oddsidemargin}{-10mm}
\addtolength{\evensidemargin}{-10mm}
\addtolength{\textwidth}{15mm}

\begin{document}

\[
\text{\textbf{Proof of some conjectured formulas for }}\frac{\mathbf{1}}{%
\mathbf{\pi }}\text{ \textbf{by Z.W.Sun.}} 
\]%
\[
\text{Gert Almkvist and Alexander Aycock} 
\]

Recently Z.W.Sun found over hundred conjectured formulas for $\dfrac{1}{\pi }
$. Many of them were proved by H.H.Chan, J.Wan and W.Zudilin (see [3], [9]).
Here we show that several other formulas in [6] are simple transformations
of known formulas for $\dfrac{1}{\pi }$ , most of them due to Ramanujan.
E.g. the following monstrous formula (not in [6])

\[
\sum_{n=0}^{\infty }A_{n}P(n)\frac{1}{262537412640769728^{n}}=\frac{%
13803981511092062440689}{\pi \sqrt{163}} 
\]%
where%
\[
P(n)=4129922862271324476805+16564777691765267456000n 
\]%
and%
\[
A_{n}=1728^{n}\sum_{k=0}^{n}\dbinom{-1/12}{k}\dbinom{-7/12}{k}\dbinom{-5/12}{%
n-k}\dbinom{-11/12}{n-k} 
\]%
is a transformation of Chudnovsky's formula 
\[
\sum_{n=0}^{\infty }(-1)^{n}a_{n}(13591409+545140134n)\frac{1}{640320^{3n}}=%
\frac{53360\sqrt{640320}}{12\pi } 
\]%
where%
\[
a_{n}=\dbinom{2n}{n}\dbinom{3n}{n}\dbinom{6n}{3n} 
\]%
The transformation is%
\[
\sum_{n=0}^{\infty }A_{n}x^{n}=\frac{1}{\sqrt{1-1728x}}\sum_{n=0}^{\infty
}a_{n}\left( -\frac{x}{1-1728x}\right) ^{n} 
\]

\textbf{General Transformation.}

Assume that we have a Ramanujan-like formula 
\[
\sum_{n=0}^{\infty }a_{n}(a+bn)x_{0}^{n}=\frac{1}{\pi } 
\]%
We make the substitution%
\[
\sum_{n=0}^{\infty }A_{n}x^{n}=\frac{1}{\sqrt{1-Mx}}\sum_{n=0}^{\infty
}a_{n}\left( -\frac{x}{1-Mx}\right) ^{n} 
\]

\textbf{Proposition 1. }Let

\ 
\[
w_{0}=-\frac{x_{0}}{1-Mx_{0}} 
\]%
Then we have the formula%
\[
\sum_{n=0}^{\infty }A_{n}(A+Bn)w_{0}^{n}=\frac{1}{\pi } 
\]%
where%
\[
A=\left\{ \frac{1}{2}bMx_{0}+a(1-Mx_{0})\right\} (1-Mx_{0})^{-3/2} 
\]%
and%
\[
B=b(1-Mx_{0})^{-3/2} 
\]%
\newline

\textbf{Proof: }The transformation above is an involution, e.g. we also have%
\[
\sum_{n=0}^{\infty }a_{n}x^{n}=\frac{1}{\sqrt{1-Mx}}\sum_{n=0}^{\infty
}A_{n}\left( -\frac{x}{1-Mx}\right) ^{n} 
\]%
Let \ $\theta =x\dfrac{d}{dx}$ . Then%
\[
\sum_{n=0}^{\infty }a_{n}(a+bn)x^{n}=(a+b\theta )\sum_{n=0}^{\infty
}a_{n}x^{n} 
\]%
\[
=(a+b\theta )\left\{ \frac{1}{\sqrt{1-Mx}}\sum_{n=0}^{\infty }A_{n}\left( -%
\frac{x}{1-Mx}\right) ^{n}\right\} 
\]%
\[
=\sum_{n=0}^{\infty }(-1)^{n}A_{n}(a+b\theta )\left\{ \frac{x^{n}}{%
(1-Mx)^{n+1/2}}\right\} 
\]%
\[
=\sum_{n=0}^{\infty }A_{n}\left\{ \frac{a+bn}{(1-Mx)^{1/2}}+\frac{(n+\dfrac{1%
}{2})bx}{(1-Mx)^{3/2}}\right\} \left( -\frac{x}{1-Mx}\right) ^{n} 
\]%
Substituting \ $x=x_{0}$ \ we are done.

The example in the introduction is the case \ $s=\dfrac{1}{6}$ and \ $M=1728$
of the hypergeometric case%
\[
a_{n}=M^{n}\frac{(1/2)_{n}(s)_{n}(1-s)_{n}}{n!^{3}} 
\]

Proving the transformation is a Maple exercise in each of the cases \ $s=%
\dfrac{1}{3},\dfrac{1}{4},\dfrac{1}{6}$. E.g. in the case \ $s=\dfrac{1}{6}$
\ one shows that both sides satisfy the differential equation%
\[
y^{\prime \prime \prime }+\frac{3(1-3456x)}{x(1-1728x)}y^{\prime \prime }+%
\frac{1-11856x+20155392x^{2}}{x^{2}(1-1728x)^{2}}y^{\prime }-\frac{%
24(31-93312x)}{x^{2}(1-1728x)^{2}}=0 
\]%
and checks that the first four coefficients agree.

\textbf{The case }$\mathbf{s=}\dfrac{\mathbf{1}}{\mathbf{3}}.$

Here we have \ $M=108$ and 
\[
a_{n}=108^{n}\frac{(1/2)_{n}(1/3)_{n}(2/3)_{n}}{n!^{3}}=\dbinom{2n}{n}^{2}%
\dbinom{3n}{n} 
\]%
with%
\[
A_{n}=108^{n}\sum_{k=0}^{n}\dbinom{-2/3}{k}\dbinom{-1/6}{k}\dbinom{-1/3}{n-k}%
\dbinom{-5/6}{n-k} 
\]%
In the table below the fomula 
\[
\sum_{n=0}^{\infty }a_{n}(a+bn)x_{0}^{n}=\frac{1}{\pi } 
\]%
is transformed to%
\[
\sum_{n=0}^{\infty }A_{n}(A+Bn)w_{0}^{n}=\frac{1}{\pi } 
\]%
\[
\begin{tabular}{|l|l|l|l|l|l|l|}
\hline
\# in [6] & $x_{0}$ & $a$ & $b$ & $w_{0}$ & $A$ & $B$ \\ \hline
4.16 & $-\dfrac{1}{192}$ & $\dfrac{\sqrt{3}}{4}$ & $\dfrac{5\sqrt{3}}{4}$ & $%
\dfrac{1}{300}$ & $\dfrac{\sqrt{3}}{50}$ & $\dfrac{16\sqrt{3}}{25}$ \\ \hline
4.18 & $-\dfrac{1}{1728}$ & $\dfrac{7\sqrt{3}}{36}$ & $\dfrac{17\sqrt{3}}{12}
$ & $\dfrac{1}{1836}$ & $\dfrac{11\sqrt{51}}{306}$ & $\dfrac{48\sqrt{51}}{153%
}$ \\ \hline
4.20 & $-\dfrac{1}{8640}$ & $\dfrac{\sqrt{15}}{12}$ & $\dfrac{3\sqrt{15}}{4}$
& $\dfrac{1}{8748}$ & $\dfrac{85\sqrt{3}}{486}$ & $\dfrac{400\sqrt{3}}{243}$
\\ \hline
4.21 & $-\dfrac{1}{108\cdot 2^{10}}$ & $\dfrac{53\sqrt{3}}{288}$ & $\dfrac{%
205\sqrt{3}}{96}$ & $\dfrac{1}{110700}$ & $\dfrac{527\sqrt{123}}{18450}$ & $%
\dfrac{3072\sqrt{123}}{9225}$ \\ \hline
- & $-\dfrac{1}{108\cdot 3024}$ & $\dfrac{13\sqrt{7}}{108}$ & $\dfrac{55%
\sqrt{7}}{36}$ & $\dfrac{1}{326700}$ & $\dfrac{9989\sqrt{3}}{54450}$ & $%
\dfrac{127008\sqrt{3}}{54450}$ \\ \hline
4.22 & $-\dfrac{1}{108\cdot 500^{2}}$ & $\dfrac{827\sqrt{3}}{4500}$ & $%
\dfrac{4717\sqrt{3}}{1500}$ & $\dfrac{1}{27000108}$ & $\dfrac{97659\sqrt{267}%
}{4500018}$ & $\dfrac{1500000\sqrt{267}}{4500018}$ \\ \hline
4.17 & $\dfrac{1}{1458}$ & $\dfrac{8}{27}$ & $\dfrac{20}{9}$ & $-\dfrac{1}{%
1350}$ & $\dfrac{52\sqrt{3}}{225}$ & $\dfrac{36\sqrt{3}}{25}$ \\ \hline
4.19 & $\dfrac{1}{27\cdot 125}$ & $\dfrac{8\sqrt{3}}{45}$ & $\dfrac{22\sqrt{3%
}}{45}$ & $-\dfrac{1}{3267}$ & $\dfrac{100\sqrt{15}}{1089}$ & $\dfrac{250%
\sqrt{15}}{363}$ \\ \hline
4.14 & $-\dfrac{1}{27}$ & $\dfrac{4\sqrt{3}}{9}$ & $\dfrac{5\sqrt{3}}{3}$ & $%
\dfrac{1}{135}$ & $-\dfrac{2\sqrt{15}}{45}$ & $\dfrac{\sqrt{15}}{15}$ \\ 
\hline
\end{tabular}%
\]

The last formula from columns 2-4 is divergent but results in the following
supercongruence%
\[
\sum_{n=0}^{p-1}a_{n}(4+15n)\frac{1}{(-27)^{n}}\equiv 4p\QDOVERD( ) {-3}{p}%
\text{ }\func{mod}\text{ }p^{3}
\]%
conjectured by Z.W.Sun in [8].

\textbf{The case \ }$\mathbf{s=}\dfrac{\mathbf{1}}{\mathbf{4}}.$

Here we have \ $M=256$ \ and%
\[
a_{n}=256^{n}\frac{(1/2)_{n}(1/4)_{n}(3/4)_{n}}{n!^{3}}=\dbinom{2n}{n}^{2}%
\dbinom{4n}{2n}
\]%
with%
\[
A_{n}=256^{n}\sum_{k=0}^{n}\dbinom{-1/8}{k}\dbinom{-5/8}{k}\dbinom{-3/8}{n-k}%
\dbinom{-7/8}{n-k}
\]%
\[
\begin{tabular}{|l|l|l|l|l|l|l|}
\hline
\# in [6] & $x_{0}$ & $a$ & $b$ & $w_{0}$ & $A$ & $B$ \\ \hline
4.23 & $-\dfrac{1}{1024}$ & $\dfrac{3}{8}$ & $\dfrac{5}{2}$ & $\dfrac{1}{1280%
}$ & $\dfrac{\sqrt{5}}{20}$ & $\dfrac{4\sqrt{5}}{5}$ \\ \hline
- & $-\dfrac{1}{63^{2}}$ & $\dfrac{8\sqrt{7}}{63}$ & $\dfrac{65\sqrt{7}}{63}$
& $\dfrac{1}{4225}$ & $\dfrac{392\sqrt{7}}{4225}$ & $\dfrac{3969\sqrt{7}}{%
4225}$ \\ \hline
4.27 & $-\dfrac{1}{3\cdot 2^{12}}$ & $\dfrac{3\sqrt{3}}{16}$ & $\dfrac{7%
\sqrt{3}}{4}$ & $\dfrac{1}{12544}$ & $\dfrac{57}{196}$ & $\dfrac{144}{49}$
\\ \hline
4.29 & $-\dfrac{1}{288^{2}}$ & $\dfrac{23}{72}$ & $260\dfrac{65}{18}$ & $%
\dfrac{1}{83200}$ & $\dfrac{113\sqrt{13}}{1300}$ & $\dfrac{324\sqrt{13}}{325}
$ \\ \hline
- & $-\dfrac{1}{1280\cdot 72^{2}}$ & $\dfrac{41\sqrt{5}}{288}$ & $644\dfrac{%
161\sqrt{5}}{72}$ & $\dfrac{1}{6635776}$ & $\dfrac{32995}{103684}$ & $\dfrac{%
518400}{103684}$ \\ \hline
- & $-\dfrac{1}{14112^{2}}$ & $\dfrac{1123}{3528}$ & $\dfrac{5365}{882}$ & $%
\dfrac{1}{199148800}$ & $\dfrac{162833\sqrt{37}}{3111700}$ & $\dfrac{3111696%
\sqrt{37}}{3111700}$ \\ \hline
4.26 & $\dfrac{1}{648}$ & $\dfrac{2}{9}$ & $\dfrac{14}{9}$ & $-\dfrac{1}{392}
$ & $\dfrac{46}{49}$ & $\dfrac{162}{49}$ \\ \hline
4.25 & $\dfrac{1}{48^{2}}$ & $\dfrac{\sqrt{3}}{6}$ & $\dfrac{4\sqrt{3}}{3}$
& $-\dfrac{1}{2048}$ & $\dfrac{3\sqrt{3}}{16}$ & $\dfrac{9\sqrt{3}}{8}$ \\ 
\hline
4.28 & $\dfrac{1}{144^{2}}$ & $\dfrac{2\sqrt{2}}{9}$ & $\dfrac{20\sqrt{2}}{9}
$ & $-\dfrac{1}{20480}$ & $\dfrac{17\sqrt{10}}{80}$ & $\dfrac{81\sqrt{10}}{40%
}$ \\ \hline
4.30 & $\dfrac{1}{784^{2}}$ & $\dfrac{9\sqrt{3}}{49}$ & $\dfrac{120\sqrt{3}}{%
49}$ & $-\dfrac{1}{614400}$ & $\dfrac{361\sqrt{2}}{1600}$ & $\dfrac{2401%
\sqrt{2}}{800}$ \\ \hline
4.31 & $\dfrac{1}{16^{2}\cdot 99^{2}}$ & $\dfrac{19\sqrt{11}}{198}$ & $%
\dfrac{140\sqrt{11}}{99}$ & $-\dfrac{1}{2508800}$ & $\dfrac{1331\sqrt{22}}{%
19600}$ & $\dfrac{9801\sqrt{22}}{9800}$ \\ \hline
- & $\dfrac{1}{16^{2}\cdot 99^{4}}$ & $\dfrac{2206\sqrt{2}}{9801}$ & $\dfrac{%
52780\sqrt{2}}{9801}$ & $-\dfrac{1}{24591257600}$ & $\dfrac{8029841\sqrt{58}%
}{192119200}$ & $\dfrac{192119202\sqrt{58}}{192119200}$ \\ \hline
4.24 & $-\dfrac{1}{144}$ & $\dfrac{\sqrt{3}}{3}$ & $\dfrac{5\sqrt{3}}{3}$ & $%
\dfrac{1}{400}$ & $-\dfrac{3\sqrt{3}}{25}$ & $\dfrac{9\sqrt{3}}{25}$ \\ 
\hline
\end{tabular}%
\]

The last Ramanujan-like formula is divergent (for a proof see Guillera [2])
but leads to the conjectured supercongruence (already in [8])%
\[
\sum_{n=0}^{p-1}a_{n}(1+5n)\frac{1}{(-144)^{n}}\equiv p\QDOVERD( ) {-3}{p}%
\text{ }\func{mod}\text{ }p^{3}
\]

\textbf{The case \ }$\mathbf{s=}\dfrac{\mathbf{1}}{\mathbf{6}}.$

Here \ $M=1728$ and%
\[
a_{n}=1728^{n}\frac{(1/2)_{n}(1/6)_{n}(5/6)_{n}}{n!^{3}}=\dbinom{2n}{n}%
\dbinom{3n}{n}\dbinom{6n}{3n}
\]%
with%
\[
A_{n}=1728^{n}\sum_{k=0}^{n}\dbinom{-1/12}{k}\dbinom{-7/12}{k}\dbinom{-5/12}{%
n-k}\dbinom{-11/12}{n-k}
\]%
\[
\small
\begin{tabular}{|l|l|l|l|l|l|l|}
\hline
\# in [6] & $x_{0}$ & $a$ & $b$ & $w_{0}$ & $A$ & $B$ \\ \hline
4.33 & $-\dfrac{1}{15^{3}}$ & $\dfrac{8\sqrt{15}}{75}$ & $\dfrac{21\sqrt{15}%
}{25}$ & $\dfrac{1}{5103}$ & $-\dfrac{8\sqrt{7}}{189}$ & $\dfrac{125\sqrt{7}%
}{189}$ \\ \hline
4.36 & $-\dfrac{1}{32^{3}}$ & $\dfrac{15\sqrt{2}}{64}$ & $\dfrac{77\sqrt{2}}{%
32}$ & $\dfrac{1}{34496}$ & $\dfrac{39\sqrt{11}}{539}$ & $\dfrac{512\sqrt{11}%
}{539}$ \\ \hline
- & $-\dfrac{1}{3\cdot 160^{3}}$ & $\dfrac{93\sqrt{30}}{1600}$ & $\dfrac{759%
\sqrt{30}}{800}$ & $\dfrac{1}{12289728}$ & $\dfrac{11751\sqrt{3}}{64009}$ & $%
\dfrac{192000\sqrt{3}}{64009}$ \\ \hline
4.35 & $-\dfrac{1}{96^{3}}$ & $\dfrac{25\sqrt{6}}{192}$ & $\dfrac{57\sqrt{6}%
}{32}$ & $\dfrac{1}{886464}$ & $\dfrac{37\sqrt{19}}{513}$ & $\dfrac{512\sqrt{%
19}}{513}$ \\ \hline
- & $-\dfrac{1}{960^{3}}$ & $\dfrac{263\sqrt{15}}{3200}$ & $\dfrac{2709\sqrt{%
15}}{1600}$ & $\dfrac{1}{884737728}$ & $\dfrac{248853\sqrt{43}}{512001}$ & $%
\dfrac{512000\sqrt{43}}{512001}$ \\ \hline
- & $-\dfrac{1}{5280^{3}}$ & $\dfrac{10177\sqrt{330}}{580800}$ & $\dfrac{%
43617\sqrt{330}}{96800}$ & $\dfrac{1}{147197953728}$ & $\dfrac{3312613\sqrt{%
67}}{85184001}$ & $\dfrac{85184000\sqrt{67}}{85184001}$ \\ \hline
4.34 & $\dfrac{1}{20^{3}}$ & $\dfrac{3\sqrt{5}}{25}$ & $\dfrac{28\sqrt{5}}{25%
}$ & $-\dfrac{1}{6272}$ & $\dfrac{24\sqrt{2}}{49}$ & $\dfrac{125\sqrt{2}}{49}
$ \\ \hline
4.32 & $\dfrac{4}{60^{3}}$ & $\dfrac{2\sqrt{15}}{25}$ & $\dfrac{22\sqrt{15}}{%
25}$ & $-\dfrac{1}{52272}$ & $\dfrac{26}{121}$ & $\dfrac{250}{121}$ \\ \hline
4.37 & $\dfrac{1}{66^{3}}$ & $\dfrac{20\sqrt{33}}{363}$ & $252\dfrac{84\sqrt{%
33}}{121}$ & $-\dfrac{1}{285768}$ & $\dfrac{436}{1323}$ & $\dfrac{5324}{1323}
$ \\ \hline
- & $\dfrac{1}{255^{3}}$ & $\dfrac{144\sqrt{255}}{7225}$ & $\dfrac{2394\sqrt{%
255}}{7225}$ & $-\dfrac{1}{16579647}$ & $\dfrac{73936\sqrt{7}}{614061}$ & $%
\dfrac{1228250\sqrt{7}}{614061}$ \\ \hline
- & $-\dfrac{1}{640320^{3}}$ & $\frac{13591409\sqrt{10005}}{227897059584000}$
& $\frac{90856689\sqrt{10005}}{37982843264000}$ &  &  &  \\ \hline
\end{tabular}%
\]

We have deleted the formula obtained from Chudnovsky's formula since it is
in the Introduction.\ 

So far we have only considered Ramanujan series with rational \ $x_{0}$,
found in [1]. We give one example in case \ $s=\dfrac{1}{3}$ \ with \ $x_{0}=%
\dfrac{1}{72}(7\sqrt{3}-12)$

\[
\sum_{n=0}^{\infty }a_{n}(1+(5+\sqrt{3})n)\left( \frac{7\sqrt{3}-12}{72}%
\right) ^{n}=\frac{2+\sqrt{3}}{\pi } 
\]%
giving%
\[
\sum_{n=0}^{\infty }A_{n}\left\{ 2(27\sqrt{3}-41)+8(5+\sqrt{3})n\right\}
\left( \frac{15-14\sqrt{3}}{66^{2}}\right) ^{n}=\frac{254-134\sqrt{3}}{\pi } 
\]

In the paper [7] by Z.W.Sun there are some formulas for \ $\dfrac{1}{\pi }$
\ which are special cases of identities for the hypergeometric function%
\[
F(a,b;c;x)=\sum_{n=0}^{\infty }\frac{(a)_{n}(b)_{n}}{(c)_{n}n!}x^{n} 
\]%
Thus Theorem 1.1 (i) in [7] is the special cases \ $s=\dfrac{1}{2},\dfrac{1}{%
3},\dfrac{1}{4},\dfrac{1}{6}$ \ of

\textbf{Proposition 2:}

We have%
\[
\sum_{n=0}^{\infty }n(\frac{1}{2})^{n}\sum_{k=0}^{n}\frac{(s)_{k}(1-s)_{k}}{%
k!^{2}}\frac{(s)_{n-k}(1-s)_{n-k}}{(n-k)!^{2}}=\frac{2}{\pi }\sin (\pi s) 
\]

\textbf{Proof: }Let%
\[
f(x)=F(s,1-s;1;x)^{2} 
\]%
\[
=\sum_{n=0}^{\infty }x^{n}\sum_{k=0}^{n}\frac{(s)_{k}(1-s)_{k}}{k!^{2}}\frac{%
(s)_{n-k}(1-s)_{n-k}}{(n-k)!^{2}} 
\]%
Then%
\[
\theta f(x)=\sum_{n=0}^{\infty }nx^{n}\sum_{k=0}^{n}\frac{(s)_{k}(1-s)_{k}}{%
k!^{2}}\frac{(s)_{n-k}(1-s)_{n-k}}{(n-k)!^{2}} 
\]%
\[
=2s(1-s)x\cdot F(s,1-s;1;x)\cdot F(1+s,2-s;2;x) 
\]%
Put \ $x=\dfrac{1}{2}$ \ and use the evaluation%
\[
F(a,b;\frac{a+b+1}{2};\frac{1}{2})=\sqrt{\pi }\frac{\Gamma (\dfrac{a+b+1}{2})%
}{\Gamma (\dfrac{a+1}{2})\Gamma (\dfrac{b+1}{2})} 
\]%
We obtain%
\[
2\frac{1}{2}s(1-s)\pi \frac{\Gamma (1)}{\Gamma (\dfrac{1}{2}+\dfrac{s}{2}%
)\Gamma (1-\dfrac{s}{2})}\frac{\Gamma (2)}{\Gamma (1+\dfrac{s}{2})\Gamma (%
\dfrac{3}{2}-\dfrac{s}{2})} 
\]%
\[
=s(1-s)\pi \frac{1}{\Gamma (\dfrac{1}{2}-\dfrac{s}{2})\dfrac{s}{2}\Gamma (%
\dfrac{s}{2})}\frac{1}{\Gamma (\dfrac{1}{2}+\dfrac{s}{2})(\dfrac{1}{2}-%
\dfrac{s}{2})\Gamma (\dfrac{1}{2}-\dfrac{s}{2})} 
\]%
\[
=4\pi \frac{\sin (\dfrac{\pi s}{2})}{\pi }\frac{\sin (\pi (\dfrac{1}{2}-%
\dfrac{s}{2}))}{\pi }=\frac{4}{\pi }\sin (\frac{\pi s}{2})\cos (\frac{\pi s}{%
2})=\frac{2}{\pi }\sin (\pi s) 
\]

\[
\]

\textbf{Some other transformations.}

We start with proving Conjecture 4 in [6]. We have

\textbf{Proposition 3. }Let%
\[
A_{n}=\sum_{k=0}^{n}\dbinom{2k}{k}\dbinom{2n-2k}{n-k}\dbinom{-s}{k}\dbinom{%
-(1-s)}{n-k} 
\]%
Then the following formula is valid%
\[
\sum_{n=0}^{\infty }A_{n}x^{n}=\frac{1}{\sqrt{1+4x}}\text{ }_{3}F\text{ }%
_{2}(1/2,s,1-s;1,1;-\frac{4x^{2}}{1+4x}) 
\]

\textbf{Classical Proof:}

We first note the identities%
\[
\dbinom{2k}{k}=4^{k}\frac{(1/2)_{k}}{k!} 
\]%
\[
\dbinom{-s}{k}=(-1)^{k}\frac{(s)_{k}}{k!} 
\]%
where \ $(a)_{0}=1$ \ and%
\[
(a)_{k}=a(a+1)...(a+k-1)\text{ \ for \ }k>0 
\]%
We get%
\[
L=\sum_{n=0}^{\infty }\sum_{k=0}^{n}\dbinom{2k}{k}\dbinom{2n-2k}{n-k}\dbinom{%
-s}{k}\dbinom{-(1-s)}{n-k}x^{n} 
\]%
\[
=\sum_{n=0}^{\infty }\sum_{k=0}^{n}\frac{(1/2)_{k}(s)_{k}}{k!^{2}}\frac{%
(1/2)_{n-k}(1-s)_{n-k}}{(n-k)!^{2}}(-4x)^{n} 
\]%
\[
=F(1/2,s;1;-4x)F(1/2,1-s;1,-4x) 
\]%
Now Euler's identity%
\[
F(a,b;,c;x)=(1-x)^{c-a-b}F(c-a,c-b;c;x) 
\]%
leads to%
\[
F(1/2,1-s;1,-4x)=(1+4x)^{-1/2+s}F(1/2,s;1;-4x) 
\]%
Hence%
\[
L=\frac{1}{\sqrt{1+4x}}\left\{ (1+4x)^{s/2}F(1/2,s;1;-4x)\right\} ^{2} 
\]%
Now we use the following identity (see [2], p.176, Exercise 1b)%
\[
F(2a,b;2b;x)=(1-x)^{-a}F(a,b-a;b+1/2;\frac{x^{2}}{4(x-1)} 
\]%
with \ $a=s/2,b=1/2$ to get%
\[
F(s,1/2;1;-4x)=(1+4x)^{-s/2}F(s/2,(1-s)/2;1;-\frac{4x^{2}}{1+4x}) 
\]%
Finally Clausen's identity%
\[
F(a,b;a+b+1/2;x)^{2}=F(2a,2b,a+b;a+b+1/2,2a+2b;x) 
\]%
gives%
\[
L=\frac{1}{\sqrt{1+4x}}F(1/2,s,1-s;1,1;-\frac{4x^{2}}{1+4x}) 
\]%
and the proof is finished.

\textbf{Maple Proof:}

\textbf{\ }Using Maple one verifies that both sides satisfy the differential
equation%
\[
y^{\prime \prime \prime }+\frac{3(1+8x)}{x(1+4x)}y^{\prime \prime }+\frac{%
1+28x+(108+16s-16s^{2})x^{2}}{x^{2}(1+4x)^{2}}y^{\prime }+\frac{%
2(1+(6+8s-8s^{2})x)}{x^{2}(1+4x)^{2}}y=0 
\]%
Then we check that the first terms in the power series solutions agree.

\textbf{Proposition 4. }Let%
\[
a_{n}=\frac{(1/2)_{n}(s)_{n}(1-s)_{n}}{n!^{3}} 
\]%
Given a formula for \ $\dfrac{1}{\pi }$ of Ramanujan type%
\[
\sum_{n=0}^{\infty }a_{n}(a+bn)x_{0}^{n}=\frac{1}{\pi } 
\]%
Let%
\[
w_{0}=\frac{1}{2}(-x_{0}\pm \sqrt{x_{0}^{2}-x_{0}}) 
\]%
Then the transformation above gives the formulas%
\[
\sum_{n=0}^{\infty }A_{n}(A+Bn)w_{0}^{n}=\frac{1}{\pi } 
\]%
where%
\[
A=\sqrt{1+4w_{0}}\left\{ a+\frac{bw_{0}}{1+2w_{0}}\right\} 
\]%
\[
B=\frac{b(1+4w_{0})^{3/2}}{2(1+2w_{0})} 
\]

\textbf{Proof: }We have%
\[
\sum_{n=0}^{\infty }A_{n}w^{n}=\frac{1}{\sqrt{1+4w}}\sum_{n=0}^{\infty
}a_{n}(-\frac{w^{2}}{1+4w})^{n} 
\]%
Take \ $A+B\theta $ \ on both sides ( $\theta =w\dfrac{d}{dw}$ )%
\[
\sum_{n=0}^{\infty }A_{n}(A+Bn)w^{n}=\sum_{n=0}^{\infty }a_{n}\left\{ \frac{A%
}{\sqrt{1+4w}}+\frac{2B}{(1+4w)^{3/2}}(-w+(2w+1)n\right\} (-\frac{w^{2}}{1+4w%
})^{n} 
\]%
Now put \ $w=w_{0}$ \ so \ $-\dfrac{w_{0}^{2}}{1+4w_{0}}=x_{0}$ \ and the
right hand is \ $\sum_{n=0}^{\infty }a_{n}(a+bn)x_{0}^{n}.$ We get%
\[
a=\frac{A}{\sqrt{1+4w_{0}}}-\frac{2Bw_{0}}{(1+4w_{0})^{3/2}} 
\]%
\[
b=\frac{2B(2w_{0}+1)}{(1+4w_{0})^{3/2}} 
\]%
and solving for \ $A$ \ and \ $B$ \ we are done.

$s=\dfrac{1}{2}$%
\[
\begin{tabular}{|l|l|l|l|l|l|}
\hline
$x_{0}$ & $a$ & $b$ & $w_{0}$ & $A$ & $B$ \\ \hline
$-1$ & $\dfrac{1}{2}$ & $2$ & $\dfrac{1}{2}(1-\sqrt{2})$ & $\dfrac{-3+2\sqrt{%
2}}{2}$ & $\dfrac{-4+3\sqrt{2}}{2}$ \\ \hline
$-\dfrac{1}{8}$ & $\dfrac{\sqrt{2}}{4}$ & $\dfrac{3\sqrt{2}}{2}$ & $\dfrac{1%
}{16}(1\pm 3)$ & $\dfrac{1}{2}(1\pm 1)$ & $\dfrac{1}{4}(5\pm 3)$ \\ \hline
\end{tabular}%
\]

$s=\dfrac{1}{3}$%
\[
\small
\begin{tabular}{|l|l|l|l|l|l|}
\hline
$x_{0}$ & $a$ & $b$ & $w_{0}$ & $A$ & $B$ \\ \hline
$-\dfrac{9}{16}$ & $\dfrac{\sqrt{3}}{4}$ & $\dfrac{5\sqrt{3}}{4}$ & $\dfrac{3%
}{32}(3\pm 5)$ & $\dfrac{\sqrt{3}}{32}(19\pm 21)$ & $\dfrac{\sqrt{3}}{16}%
(17\pm 15)$ \\ \hline
$-\dfrac{1}{16}$ & $\dfrac{7\sqrt{3}}{36}$ & $\dfrac{17\sqrt{3}}{12}$ & $%
\dfrac{1}{32}(1\pm \sqrt{17})$ & $\dfrac{17\sqrt{51}\pm 65\sqrt{3}}{288}$ & $%
\dfrac{9\sqrt{51}\pm 17\sqrt{3}}{48}$ \\ \hline
$-\dfrac{1}{80}$ & $\dfrac{\sqrt{15}}{12}$ & $\dfrac{3\sqrt{15}}{4}$ & $%
\dfrac{1}{160}(1\pm 9)$ & $\dfrac{\sqrt{3}}{96}(19\pm 11)$ & $\dfrac{\sqrt{3}%
}{48}(41\pm 9)$ \\ \hline
$-\dfrac{1}{1024}$ & $\dfrac{53\sqrt{3}}{288}$ & $\dfrac{205\sqrt{3}}{96}$ & 
$\dfrac{1}{2048}(1\pm 5\sqrt{41})$ & $\dfrac{533\sqrt{123}\pm 721\sqrt{3}}{%
18432}$ & $\dfrac{513\sqrt{123}\pm 205\sqrt{3}}{3072}$ \\ \hline
$-\dfrac{1}{3024}$ & $\dfrac{13\sqrt{7}}{108}$ & $\dfrac{55\sqrt{7}}{36}$ & $%
\dfrac{1}{6048}(1\pm 55)$ & $\dfrac{\sqrt{3}}{7776}(1433\pm 191)$ & $\dfrac{%
\sqrt{3}}{1296}(1513\pm 55)$ \\ \hline
$-\dfrac{1}{500^{2}}$ & $\dfrac{827\sqrt{3}}{4500}$ & $\dfrac{4717\sqrt{3}}{%
1500}$ & $\dfrac{1}{500000}(1\pm 53\sqrt{89})$ & $\dfrac{17533\sqrt{267}\pm
3161\sqrt{3}}{900000}$ & $\dfrac{125001\sqrt{267}\pm 4717\sqrt{3}}{750000}$
\\ \hline
\end{tabular}%
\]

$s=\dfrac{1}{4}\qquad $%
\[
\small
\begin{tabular}{|l|l|l|l|l|l|}
\hline
$x_{0}$ & $a$ & $b$ & $w_{0}$ & $A$ & $B$ \\ \hline
$-\dfrac{1}{4}$ & $\dfrac{3}{8}$ & $\dfrac{5}{2}$ & $\dfrac{1}{8}(1\pm \sqrt{%
5})$ & $\dfrac{\pm 13+5\sqrt{5}}{16}$ & $\dfrac{\pm 5+3\sqrt{5}}{4}$ \\ 
\hline
$-(\dfrac{16}{63})^{2}$ & $\dfrac{8\sqrt{7}}{63}$ & $\dfrac{65\sqrt{7}}{63}$
& $\dfrac{8}{63^{2}}(16\pm 65)$ & $\dfrac{8\sqrt{7}}{49}(1\pm 1)$ & $\dfrac{%
\sqrt{7}}{7038}(4481\pm 2080)$ \\ \hline
$-\dfrac{1}{48}$ & $\dfrac{\sqrt{3}}{16}$ & $\dfrac{7\sqrt{3}}{12}$ & $%
\dfrac{1}{96}(1\pm 7)$ & $\dfrac{1}{192}(23\pm 17)$ & $\dfrac{1}{48}(25\pm
7) $ \\ \hline
$-\dfrac{1}{324}$ & $\dfrac{23}{72}$ & $\dfrac{65}{18}$ & $\dfrac{1}{648}%
(1\pm 5\sqrt{13})$ & $\dfrac{\pm 17+13\sqrt{13}}{144}$ & $\dfrac{\pm 65+163%
\sqrt{13}}{324}$ \\ \hline
$-\dfrac{1}{5\cdot 72^{2}}$ & $\dfrac{41\sqrt{5}}{288}$ & $\dfrac{161\sqrt{5}%
}{72}$ & $\dfrac{1}{51840}(1\pm 161)$ & $\dfrac{1}{6912}(2201\pm 121)$ & $%
\dfrac{1}{5184}(12961\pm 161)$ \\ \hline
$-\dfrac{1}{882^{2}}$ & $\dfrac{1123}{3528}$ & $\dfrac{5365}{882}$ & $\dfrac{%
1}{1555848}(1\pm 145\sqrt{37})$ & $\dfrac{\pm 439+6031\sqrt{37}}{115248}$ & $%
\dfrac{\pm 5365+388963\sqrt{37}}{777924}$ \\ \hline
\end{tabular}%
\]

$s=\dfrac{1}{6}$\\[2mm]
\hspace*{-20mm}
\small
\begin{tabular}{|l|l|l|l|l|l|}
\hline
$x_{0}$ & $a$ & $b$ & $w_{0}$ & $A$ & $B$ \\ \hline
$-(\dfrac{4}{5})^{3}$ & $\dfrac{8\sqrt{15}}{75}$ & $\dfrac{21\sqrt{15}}{25}$
& $\dfrac{32\pm 12\sqrt{21}}{125}$ & $\dfrac{168\sqrt{7}\pm 316\sqrt{3}}{375}
$ & $\dfrac{253\sqrt{7}\pm 336\sqrt{3}}{250}$ \\ \hline
$-(\dfrac{3}{8})^{3}$ & $\dfrac{15\sqrt{2}}{64}$ & $\dfrac{77\sqrt{2}}{32}$
& $\dfrac{27\pm 21\sqrt{33}}{1024}$ & $\dfrac{33\sqrt{11}\pm 69\sqrt{3}}{256}
$ & $\dfrac{283\sqrt{11}\pm 231\sqrt{3}}{512}$ \\ \hline
$-\dfrac{1}{512}$ & $\dfrac{25\sqrt{6}}{192}$ & $\dfrac{57\sqrt{6}}{32}$ & $%
\dfrac{1\pm 3\sqrt{57}}{1024}$ & $\dfrac{57\sqrt{19}\pm 49\sqrt{3}}{768}$ & $%
\dfrac{257\sqrt{19}\pm 57\sqrt{3}}{512}$ \\ \hline
$-\dfrac{9}{40^{3}}$ & $\dfrac{93\sqrt{30}}{1600}$ & $\dfrac{759\sqrt{30}}{%
800}$ & $\dfrac{9\pm 759}{128000}$ & $\dfrac{\sqrt{3}(5889\pm 639)}{32000}$
& $\dfrac{\sqrt{3}(96027\pm 2277)}{64000}$ \\ \hline
$-\dfrac{1}{80^{3}}$ & $\dfrac{263\sqrt{15}}{3200}$ & $\dfrac{2709\sqrt{15}}{%
1600}$ & $\dfrac{1\pm 63\sqrt{129}}{1024000}$ & $\dfrac{12427\sqrt{43}\pm 743%
\sqrt{3}}{256000}$ & $\dfrac{256001\sqrt{43}\pm 2709\sqrt{3}}{512000}$ \\ 
\hline
$-\dfrac{1}{440^{3}}$ & $\dfrac{10177\sqrt{330}}{580800}$ & $\dfrac{43617%
\sqrt{330}}{96800}$ & $\dfrac{1\pm 651\sqrt{201}}{170368000}$ & $\dfrac{%
4968921\sqrt{67}\pm 35257\sqrt{3}}{127776000}$ & $\dfrac{42592001\sqrt{67}%
\pm 43617\sqrt{3}}{85184000}$ \\ \hline
$-\frac{1}{53360^{3}}$ & $\frac{13591409\sqrt{10005}}{227897059584000}$ & $%
\frac{90856689\sqrt{10005}}{37982843264000}$ & $\frac{1\pm 557403\sqrt{489}}{%
303862746112000}$ & $\frac{5681919113121\sqrt{163}\pm 71540369\sqrt{3}}{%
12160587099402240000}$ & $\frac{75965686528001\sqrt{163}\pm 90856689\sqrt{3}%
}{8107058066268160000}$ \\ \hline
\end{tabular}\\[2mm]

This takes care of formulas 4.2-4.13 except 4.7 which comes from a divergent
series with \ $x_{0}=-\dfrac{16}{9}$. Note that we find a new formula with
rational \ $w_{0}$ for \ $s=\dfrac{1}{6}.$

\textbf{Remark: }Formula (4.11) in [6] is false. The right hand side should
be \ $\dfrac{162\sqrt{7}}{343\pi }.$

Formula 4.1 is of different kind. It is a special case of

\pagebreak

\textbf{Proposition 5.}

We have%
\[
\sum_{n=0}^{\infty }\sum_{k=0}^{n}\dbinom{-s}{k}^{2}\dbinom{-(1-s)}{n-k}%
^{2}x^{n}=\frac{1}{1-x}F(1/2,s,1-s;1,1;-\frac{4x}{(1-x)^{2}}) 
\]

\textbf{Classical Proof:}

The left hand side is%
\[
L=\sum_{n=0}^{\infty }\sum_{k=0}^{n}\frac{(s)_{k}^{2}}{k!^{2}}\frac{%
(1-s)_{n-k}^{2}}{(n-k)!^{2}}x^{n}=F(s,s;,1;x)F(1-s,1-s;1;x) 
\]%
\[
=\frac{1}{1-x}F(s,1-s;1;\frac{x}{x-1})^{2} 
\]%
after using Pfaff's identity twice%
\[
F(a,b;c;x)=(1-x)^{-a}F(a,c-b;c;\frac{x}{x-1}) 
\]%
Now we use 
\[
F(2a,2b;a+b+1/2;x)=F(a,b;a+b+1/2;4x(1-x)) 
\]%
again to get%
\[
L=\frac{1}{1-x}F(s/2,(1-s)/2;1;-\frac{4x}{(1-x)^{2}})^{2} 
\]%
\[
=\frac{1}{1-x}F(1/2,s,1-s;1,1;-\frac{4x}{(1-x)^{2}}) 
\]%
by Clausen's identity.

\textbf{Maple Proof:}

Both sides satisfy%
\[
y^{\prime \prime \prime }+\frac{3(2-5x)}{x(1-x)}y^{\prime \prime }+\frac{%
1-(10+s-s^{2})x+(12+s-s^{2})x^{2}}{x^{2}(1-x)^{2}}y^{\prime }-\frac{1}{2}%
\frac{2+s-s^{2}-(6+3s-3s^{2})x)}{x^{2}(1-x)^{2}}y=0 
\]%
Then we check the first terms in the power series.

Let 
\[
a_{n}=\frac{(1/2)_{n}(s)_{n}(1-s)_{n}}{n!^{3}} 
\]%
and%
\[
A_{n}=\sum_{k=0}^{n}\dbinom{-s}{k}^{2}\dbinom{-(1-s)}{n-k}^{2} 
\]%
Then copying the proof of Proposition 4 we get for every formula%
\[
\sum_{n=0}^{\infty }a_{n}(a+bn)x_{0}^{n}=\frac{1}{\pi } 
\]%
a new formula%
\[
\sum_{n=0}^{\infty }A_{n}(A+Bn)w_{0}^{n}=\frac{1}{\pi } 
\]%
where%
\[
w_{0}=1-\frac{2}{x_{0}}(1-\sqrt{1-x_{0}}) 
\]%
\[
A=(1-w_{0})(a-\frac{bw_{0}}{1+w_{0}}) 
\]%
\[
B=\frac{b(1-w_{0})^{2}}{1+w_{0}} 
\]%
We give only the rational \ $w_{0}$%
\[
\begin{tabular}{|l|l|l|l|l|l|l|}
\hline
$s$ & $x_{0}$ & $a$ & $b$ & $w_{0}$ & $A$ & $B$ \\ \hline
$\dfrac{1}{3}$ & $-\dfrac{9}{16}$ & $\dfrac{\sqrt{3}}{4}$ & $\dfrac{5\sqrt{3}%
}{4}$ & $\dfrac{1}{9}$ & $\dfrac{\sqrt{3}}{9}$ & $\dfrac{8\sqrt{3}}{9}$ \\ 
\hline
$\dfrac{1}{4}$ & $-(\dfrac{16}{63})^{2}$ & $\dfrac{8\sqrt{7}}{63}$ & $\dfrac{%
65\sqrt{7}}{63}$ & $\dfrac{1}{64}$ & $\dfrac{7\sqrt{7}}{64}$ & $\dfrac{63%
\sqrt{7}}{64}$ \\ \hline
$\dfrac{1}{4}$ & $\dfrac{32}{81}$ & $\dfrac{2}{9}$ & $\dfrac{14}{9}$ & $-%
\dfrac{1}{8}$ & $\dfrac{1}{2}$ & $\dfrac{9}{4}$ \\ \hline
\end{tabular}%
\]%
\[
\]

The formulas (2.2)-(2.4) in [6] due to the twin brother Z.H.Sun are special
cases of the following

\textbf{Proposition 6: }We have%
\[
F(s,1-s;1;\frac{1}{2}(1-\sqrt{1-x}))^{2}=F(\frac{1}{2},s,1-s;1,1;x) 
\]

\textbf{Classical Proof:}

Solving for \ $x$ \ we have the equivalent statement%
\[
F(s,1-s;1;w)^{2}=F(\frac{1}{2},s,1-s;1,1;4w(1-w)) 
\]%
Using (formula 3.1.3, p.125 in [2])%
\[
F(2a,2b;,a+b+1/2;w)=F(a,b;a+b+1/2;4w(1-w)) 
\]%
we get

\[
F(s,1-s;1;w)^{2}=F(s/2,(1-s)/2;1;4w(1-w))^{2} 
\]%
and finish by Clausen's identity.

\textbf{Maple Proof:}

\textbf{\ }$\ $One verifies that both sides\ satisfy the differential
equation

\[
y^{\prime \prime \prime }+\frac{3}{2}\frac{2-3x}{x(1-x)}y^{\prime \prime }+%
\frac{1-(3+s-s^{2})x}{x^{2}(1-x)}y^{\prime }-\frac{1}{2}\frac{s(1-s)}{%
x^{2}(1-x)}y=0 
\]%
One expands both sides in power series and checks the first few coefficients.

Theorem 1.3 in [7] is a special case of the following transformation.

Let%
\[
A_{n}=\sum_{k=0}^{n}\frac{(s)_{k}(1-s)_{k}(s)_{n-k}(1-s)_{n-k}}{%
k!^{2}(n-k)!^{2}} 
\]%
so%
\[
F(s,1-s;1;x)^{2}=\sum_{n=0}^{\infty }A_{n}x^{n} 
\]%
so we have the following result.

\textbf{Proposition 7.}

Assume we have a formula%
\[
\sum_{n=0}^{\infty }\frac{(1/2)_{n}(s)_{n}(1-s)_{n}}{n!^{3}}(a+bn)x_{0}^{n}=%
\frac{1}{\pi } 
\]%
Then we have 
\[
\sum_{n=0}^{\infty }A_{n}(A+Bn)w_{0}^{n}=\frac{1}{\pi } 
\]%
where%
\[
w_{0}=\frac{1}{2}(1-\sqrt{1-x_{0}}) 
\]%
and%
\[
A=a 
\]%
\[
B=b\frac{1-w_{0}}{1-2w_{0}} 
\]

\textbf{Proof: }Let \ $\theta =x\dfrac{d}{dx}$ and \ $a_{n}=\dfrac{%
(1/2)_{n}(s)_{n}(1-s)_{n}}{n!^{3}}$. Then%
\[
\sum_{n=0}^{\infty }a_{n}(a+bn)x^{n}=(a+b\theta )\sum_{n=0}^{\infty
}a_{n}x^{n} 
\]%
\[
=(a+b\theta )\sum_{n=0}^{\infty }A_{n}(\frac{1}{2}(1-\sqrt{1-x}%
))^{n}=\sum_{n=0}^{\infty }A_{n}(a+\frac{bn}{2\sqrt{1-x}(1-\sqrt{1-x})})(%
\frac{1}{2}(1-\sqrt{1-x}))^{n} 
\]%
Putting $x=x_{0}$ we get%
\[
A=a 
\]%
and%
\[
B=\frac{b}{2\sqrt{1-x_{0}}(1-\sqrt{1-x_{0}})}=b\frac{1-w_{0}}{1-2w_{0}} 
\]

$s=\dfrac{1}{2}$

\[
\begin{tabular}{|l|l|l|l|l|}
\hline
$x_{0}$ & $w_{0}$ & $a=A$ & $b$ & $B$ \\ \hline
$\dfrac{1}{4}$ & $\dfrac{1}{2}-\dfrac{\sqrt{3}}{4}$ & $\dfrac{1}{4}$ & $%
\dfrac{3}{2}$ & $\dfrac{3+2\sqrt{3}}{4}$ \\ \hline
$\dfrac{1}{64}$ & $\dfrac{1}{2}-\dfrac{3\sqrt{7}}{16}$ & $\dfrac{5}{16}$ & $%
\dfrac{21}{8}$ & $\dfrac{21+8\sqrt{7}}{16}$ \\ \hline
$-1$ & $\dfrac{1}{2}-\dfrac{\sqrt{2}}{2}$ & $\dfrac{1}{2}$ & $2$ & $\dfrac{2+%
\sqrt{2}}{2}$ \\ \hline
$-\dfrac{1}{8}$ & $\dfrac{1}{2}-\dfrac{3\sqrt{2}}{8}$ & $\dfrac{\sqrt{2}}{4}$
& $\frac{3\sqrt{2}}{2}$ & $\dfrac{4+3\sqrt{2}}{4}$ \\ \hline
\end{tabular}%
\]

$s=\dfrac{1}{3}$%
\[
\begin{tabular}{|l|l|l|l|l|}
\hline
$x_{0}$ & $w_{0}$ & $a=A$ & $b$ & $B$ \\ \hline
$\frac{1}{2}$ & $\dfrac{1}{2}-\dfrac{\sqrt{2}}{4}$ & $\dfrac{\sqrt{3}}{9}$ & 
$\dfrac{2\sqrt{3}}{3}$ & $\dfrac{\sqrt{3}+\sqrt{6}}{9}$ \\ \hline
$\dfrac{2}{27}$ & $\dfrac{1}{2}-\dfrac{5\sqrt{3}}{18}$ & $\dfrac{8}{27}$ & $%
\dfrac{20}{9}$ & $\dfrac{10+6\sqrt{3}}{9}$ \\ \hline
$\dfrac{4}{125}$ & $\dfrac{1}{2}-\dfrac{11\sqrt{5}}{50}$ & $\dfrac{8\sqrt{3}%
}{45}$ & $\dfrac{22\sqrt{3}}{45}$ & $\dfrac{11\sqrt{3}+5\sqrt{15}}{15}$ \\ 
\hline
$-\dfrac{9}{16}$ & $-\dfrac{1}{8}$ & $\dfrac{\sqrt{3}}{4}$ & $\dfrac{5\sqrt{3%
}}{4}$ & $\dfrac{9\sqrt{3}}{8}$ \\ \hline
$-\dfrac{1}{16}$ & $\dfrac{1}{2}-\dfrac{\sqrt{17}}{8}$ & $\dfrac{7\sqrt{3}}{%
36}$ & $\dfrac{17\sqrt{3}}{12}$ & $\dfrac{17\sqrt{3}+4\sqrt{51}}{24}$ \\ 
\hline
$-\dfrac{1}{80}$ & $\dfrac{1}{2}-\dfrac{9\sqrt{5}}{40}$ & $\dfrac{\sqrt{15}}{%
12}$ & $\dfrac{3\sqrt{15}}{4}$ & $\dfrac{20\sqrt{3}+9\sqrt{15}}{24}$ \\ 
\hline
$-\dfrac{1}{1024}$ & $\dfrac{1}{2}-\dfrac{5\sqrt{41}}{64}$ & $\dfrac{53\sqrt{%
3}}{288}$ & $\dfrac{205\sqrt{3}}{96}$ & $\dfrac{205\sqrt{3}+32\sqrt{123}}{192%
}$ \\ \hline
$-\dfrac{1}{3024}$ & $\dfrac{1}{2}-\dfrac{55\sqrt{21}}{504}$ & $\dfrac{13%
\sqrt{7}}{108}$ & $\dfrac{55\sqrt{7}}{36}$ & $\dfrac{84\sqrt{3}+55\sqrt{7}}{%
72}$ \\ \hline
$-\dfrac{1}{500^{2}}$ & $\dfrac{1}{2}-\dfrac{53\sqrt{89}}{1000}$ & $\dfrac{%
827\sqrt{3}}{4500}$ & $\dfrac{4717\sqrt{3}}{1500}$ & $\dfrac{4717\sqrt{3}+500%
\sqrt{267}}{3000}$ \\ \hline
\end{tabular}%
\]

$s=\dfrac{1}{4}$%
\[
\begin{tabular}{|l|l|l|l|l|}
\hline
$x_{0}$ & $w_{0}$ & $a=A$ & $b$ & $B$ \\ \hline
$\dfrac{32}{81}$ & $\dfrac{1}{9}$ & $\dfrac{2}{9}$ & $\dfrac{14}{9}$ & $%
\dfrac{16}{9}$ \\ \hline
$\dfrac{1}{9}$ & $\dfrac{1}{2}-\dfrac{\sqrt{2}}{3}$ & $\dfrac{\sqrt{3}}{6}$
& $\dfrac{4\sqrt{3}}{3}$ & $\dfrac{4\sqrt{3}+3\sqrt{6}}{6}$ \\ \hline
$\dfrac{1}{81}$ & $\dfrac{1}{2}-\dfrac{2\sqrt{5}}{9}$ & $\dfrac{2\sqrt{2}}{9}
$ & $\dfrac{20\sqrt{2}}{9}$ & $\dfrac{20\sqrt{2}+9\sqrt{10}}{18}$ \\ \hline
$\dfrac{1}{49^{2}}$ & $\dfrac{1}{2}-\dfrac{10\sqrt{2}}{49}$ & $\dfrac{9\sqrt{%
3}}{49}$ & $\dfrac{120\sqrt{3}}{49}$ & $\dfrac{147\sqrt{2}+120\sqrt{3}}{98}$
\\ \hline
$\dfrac{1}{99^{2}}$ & $\dfrac{1}{2}-\dfrac{35\sqrt{2}}{99}$ & $\dfrac{19%
\sqrt{11}}{198}$ & $\dfrac{140\sqrt{11}}{99}$ & $\dfrac{140\sqrt{11}+99\sqrt{%
22}}{198}$ \\ \hline
$\dfrac{1}{99^{4}}$ & $\dfrac{1}{2}-\dfrac{910\sqrt{29}}{9801}$ & $\dfrac{%
2206\sqrt{2}}{9801}$ & $\dfrac{52780\sqrt{2}}{9801}$ & $\dfrac{52780\sqrt{2}%
+9801\sqrt{58}}{19602}$ \\ \hline
$-\dfrac{1}{4}$ & $\dfrac{1}{2}-\dfrac{\sqrt{5}}{4}$ & $\dfrac{3}{8}$ & $%
\dfrac{5}{2}$ & $\dfrac{5+2\sqrt{5}}{4}$ \\ \hline
$-(\dfrac{16}{63})^{2}$ & $-\dfrac{1}{63}$ & $\dfrac{8\sqrt{7}}{63}$ & $%
\dfrac{65\sqrt{7}}{63}$ & $\dfrac{64\sqrt{7}}{63}$ \\ \hline
$-\dfrac{1}{48}$ & $\dfrac{1}{2}-\dfrac{7\sqrt{3}}{24}$ & $\dfrac{3\sqrt{3}}{%
16}$ & $\dfrac{7\sqrt{3}}{4}$ & $\dfrac{12+7\sqrt{3}}{8}$ \\ \hline
$-\dfrac{1}{324}$ & $\dfrac{1}{2}-\dfrac{5\sqrt{13}}{36}$ & $\dfrac{23}{72}$
& $\dfrac{65}{18}$ & $\dfrac{65+18\sqrt{13}}{36}$ \\ \hline
$-\dfrac{1}{5\cdot 72^{2}}$ & $\dfrac{1}{2}-\dfrac{161\sqrt{5}}{720}$ & $%
\dfrac{41\sqrt{5}}{288}$ & $\dfrac{161\sqrt{5}}{72}$ & $\dfrac{360+161\sqrt{5%
}}{144}$ \\ \hline
$-\dfrac{1}{882^{2}}$ & $\dfrac{1}{2}-\dfrac{145\sqrt{37}}{1764}$ & $\dfrac{%
1123}{3528}$ & $\dfrac{5365}{882}$ & $\dfrac{5365+882\sqrt{37}}{1764}$ \\ 
\hline
\end{tabular}%
\]

$s=\dfrac{1}{6}$%
\[
\begin{tabular}{|l|l|l|l|l|}
\hline
$x_{0}$ & $w_{0}$ & $a=A$ & $b$ & $B$ \\ \hline
$\dfrac{27}{125}$ & $\dfrac{1}{2}-\dfrac{7\sqrt{10}}{50}$ & $\dfrac{3\sqrt{5}%
}{25}$ & $\dfrac{28\sqrt{5}}{25}$ & $\dfrac{25\sqrt{2}+14\sqrt{5}}{25}$ \\ 
\hline
$\dfrac{4}{125}$ & $\dfrac{1}{2}-\dfrac{11\sqrt{5}}{50}$ & $\dfrac{2\sqrt{15}%
}{25}$ & $\dfrac{22\sqrt{15}}{25}$ & $\dfrac{25\sqrt{3}+11\sqrt{15}}{25}$ \\ 
\hline
$\dfrac{8}{11^{3}}$ & $\dfrac{1}{2}-\dfrac{21\sqrt{33}}{242}$ & $\dfrac{20%
\sqrt{33}}{363}$ & $\dfrac{84\sqrt{33}}{121}$ & $2+\dfrac{42\sqrt{33}}{121}$
\\ \hline
$\dfrac{64}{85^{3}}$ & $\dfrac{1}{2}-\dfrac{171\sqrt{1785}}{14450}$ & $%
\dfrac{144\sqrt{255}}{7225}$ & $\dfrac{2394\sqrt{255}}{7225}$ & $\sqrt{7}+%
\dfrac{1197\sqrt{255}}{7225}$ \\ \hline
$-(\dfrac{4}{5})^{3}$ & $\dfrac{1}{2}-\dfrac{3\sqrt{105}}{50}$ & $\dfrac{8%
\sqrt{15}}{75}$ & $\dfrac{21\sqrt{15}}{25}$ & $\dfrac{25\sqrt{7}+21\sqrt{15}%
}{50}$ \\ \hline
$-(\dfrac{3}{8})^{3}$ & $\dfrac{1}{2}-\dfrac{7\sqrt{22}}{64}$ & $\dfrac{15%
\sqrt{2}}{64}$ & $\dfrac{77\sqrt{2}}{32}$ & $\dfrac{77\sqrt{2}+32\sqrt{11}}{%
64}$ \\ \hline
$-\dfrac{1}{8^{3}}$ & $\dfrac{1}{2}-\dfrac{3\sqrt{114}}{64}$ & $\dfrac{25%
\sqrt{6}}{192}$ & $\dfrac{57\sqrt{6}}{32}$ & $\dfrac{57\sqrt{6}+32\sqrt{19}}{%
64}$ \\ \hline
$-\dfrac{9}{40^{3}}$ & $\dfrac{1}{2}-\dfrac{253\sqrt{10}}{1600}$ & $\dfrac{93%
\sqrt{30}}{1600}$ & $\dfrac{759\sqrt{30}}{800}$ & $\dfrac{2400\sqrt{3}+759%
\sqrt{30}}{1600}$ \\ \hline
$-\dfrac{1}{80^{3}}$ & $\dfrac{1}{2}-\dfrac{63\sqrt{645}}{3200}$ & $\dfrac{%
263\sqrt{15}}{3200}$ & $\dfrac{2709\sqrt{15}}{1600}$ & $\dfrac{2709\sqrt{15}%
+1600\sqrt{43}}{3200}$ \\ \hline
$-\dfrac{1}{440^{3}}$ & $\dfrac{1}{2}-\dfrac{651\sqrt{22110}}{193600}$ & $%
\dfrac{10177\sqrt{330}}{580800}$ & $\dfrac{43617\sqrt{330}}{96800}$ & $%
\dfrac{96800\sqrt{67}+43617\sqrt{330}}{193600}$ \\ \hline
$-\dfrac{1}{53360^{3}}$ & $\dfrac{1}{2}-\dfrac{651\sqrt{22110}}{193600}$ & $%
\dfrac{13591409\sqrt{10005}}{227897059584000}$ & $\dfrac{90856689\sqrt{10005}%
}{37982843264000}$ & see below \\ \hline
\end{tabular}%
\]

In the last row 
\[
B=\dfrac{711822400\sqrt{163}+90856689\sqrt{10005}}{75965686528000} 
\]

\textbf{Remark: }When \ $x_{0}$ \ is positive then we get a (slowly)
convergent series with \ $\frac{1}{2}(1+\sqrt{1-x_{0}})$ \ but the sum is
not \ $\dfrac{1}{\pi }$ \ (rather a negative multiple of it ). Why?

\bigskip

\textbf{References.}

\textbf{1. }G.Almkvist, Str\"{a}ngar i m\aa nsken, Normat, 51 (2003), 22-33.

\textbf{2. }G.E.Andrews.R.Askey,R.Roy, Special Functions, Cambridge
University Press, 1999.

\textbf{3. }H.H.Chan, J.Wan, W.Zudilin, Legendre polynomials and
Ramanujan-type series for $\dfrac{1}{\pi },$

\textbf{4. }J.Guillera, Tables of Ramanujan series with rational values of $%
z $, Guillera's home page

\textbf{5. }J.Guillera, WZ-proofs of "divergent" Ramanujan-type series,
NT/1012.2681.

\textbf{6. }Z.W.Sun, List of conjectural series for powers of $\pi $ and
other constants, CA/1102.5649

\textbf{7. }Z.W.Sun, Some new series for $\dfrac{1}{\pi }$ and related
congruences, NT/1104.3856.

\textbf{8. }Z.W.Sun, Supercongruences and Eulernumbers, Sci. China Math. 54
(2011), 2509-2535.

\textbf{9. }J.Wan, W.Zudilin, Generating functions of Legendre polynomials:
A tribute to Fred Brafman,

\[
\]

\bigskip Institute of Algebraic Meditation \ \ \ \ \ \ \ \ \ \ \ \ \ \ \ \ \
\ \ \ \ \ Johannes Gutenberg-Universit\"{a}t

Fogdar\"{o}d 208, H\"{o}\"{o}r, S24333 Sweden \ \ \ \ \ \ \ \ \ \ \ \ \
D-55099 Mainz, Germany\ 

gert.almkvist@yahoo.se \ \ \ \ \ \ \ \ \ \ \ \ \ \ \ \ \ \ \ \ \ \ \ \ \ \ \
\ \ \ \ \ \ \ \ black\_Dr.House@gmx.de

\bigskip

\[
\text{\bf Appendix: A class of slowly converging series for $1/\pi$.}
\]
\[
\text{Arne Meurman}
\]

In the final remark Almkvist and Aycock ask why, when one considers the power 
series at $w_1 = \tfrac12(1+\sqrt{1-x_0})$, instead of at 
$w_0= \tfrac12(1-\sqrt{1-x_0})$, one gets formulas for negative multiples of 
$\frac{1}{\pi}$. Here we shall prove such formulas in the 
cases $s=1/2,1/3,1/4,1/6$ in Proposition 7.

Following \cite{CWZ} we set
\[
F(t) = F(s,t) = {}_2F_1(s,1-s;1;t),
\]
\[
G(t) = t\frac{dF}{dt},
\]
and let for $s=1/2,1/3,1/4,1/6$\ \  $t(\tau)=t_N(\tau)$ be given by
\[
t_4(\tau) = \left(1+\frac{1}{16}\left(\frac{\eta(\tau)}
{\eta(4\tau)}\right)^8\right)^{-1},\quad
t_3(\tau) = \left(1+\frac{1}{27}\left(\frac{\eta(\tau)}
{\eta(3\tau)}\right)^{12}\right)^{-1},
\]
\[
t_2(\tau) = \left(1+\frac{1}{64}\left(\frac{\eta(\tau)}
{\eta(2\tau)}\right)^{24}\right)^{-1},\quad
t_1(\tau) = \frac{1}{2}-\frac{1}{2}\sqrt{1-\frac{1728}{j(\tau)}}.
\]

Let $U$ be the connected component of $\{\tau\in{\bf C}\mid \Im(\tau)>0,
|t(\tau)|<1\}$ which contains all $\tau$ with sufficiently large 
imaginary part,
 a "neighborhood of $i\infty$".
Let $\tau_0\in U$ such that
\begin{equation}
w_0=t(\tau_0),\quad w_1=1-t(\tau_0)
\end{equation}
satisfy
\begin{equation}
|w_0|<1,\quad |w_1|<1.
\end{equation}
In the Ramanujan-type formulas, $\tau_0$ is usually a quadratic 
irrationality. Let $A_n$ be defined by the power series expansions
\begin{equation}
F^2(t) = \sum_{n=0}^\infty A_nt^n,
\end{equation}
as in Proposition 7. Set
\begin{equation}
C_s = \frac{1}{2\sin(\pi s)}.
\end{equation}

\begin{theorem}
Assume that there is an identity
\begin{equation}
\sum_{n=0}^\infty (A+Bn)A_nw_0^n = \frac{C}{\pi},
\end{equation}
equivalently
\begin{equation}
AF^2(w_0) +2BF(w_0)G(w_0) = \frac{C}{\pi}.
\label{FG1}
\end{equation}
Then
\begin{equation}
\sum_{n=0}^\infty (\hat A+\hat Bn)A_nw_1^n 
= \frac{\hat C}{\pi},
\label{w1id}
\end{equation}
where
\begin{equation}
\begin{split}
\hat A &= A,\\
\hat B &= -B\frac{w_0}{w_1},\\
\hat C &= \frac{C\left(\frac{\tau_0}{i}\right)^2}{C_s^2} 
- \frac{B\left(\frac{\tau_0}{i}\right)}{C_s^2w_1}.
\end{split}
\end{equation}
\end{theorem}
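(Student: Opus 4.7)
The plan is to establish (\ref{w1id}) from (\ref{FG1}) by expressing $F(w_1)$ and $G(w_1)$ as explicit linear combinations of $F(w_0)$ and $G(w_0)$. Since $\theta F^2 = 2FG$, the identity (\ref{w1id}) is equivalent to
\[
\hat A F^2(w_1) + 2\hat B F(w_1) G(w_1) = \hat C/\pi,
\]
so the task reduces to computing the left-hand side in terms of $F(w_0)^2$, $F(w_0)G(w_0)$, and a constant. Two ingredients drive the reduction. The first is the modular uniformization relation
\[
F(w_1) = \frac{\tau_0}{iC_s}\,F(w_0),
\]
which reflects that in each of the four cases $\tau\mapsto t_N(\tau)$ is (up to the normalization fixed by the $\eta$-quotient formulas in the statement) a Schwarz inverse of the period map of the Picard-Fuchs equation for $F(s,1-s;1;t)$. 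The second is the Legendre-type Wronskian identity
\[
w_0 F(w_0) G(w_1) + w_1 F(w_1) G(w_0) = \frac{\sin(\pi s)}{\pi} = \frac{1}{2\pi C_s},
\]
which follows from $F(t)$ and $F(1-t)$ being independent solutions of the self-adjoint hypergeometric ODE $t(1-t)y''+(1-2t)y'-s(1-s)y=0$, whose Wronskian therefore has the form $K_0/[t(1-t)]$; the constant $K_0$ is pinned down by the degenerate connection formula for $F(s,1-s;1;1-t)$ near $t=0$.

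Combining the two relations and solving for $G(w_1)$ yields
\[
G(w_1) = \frac{1}{2\pi C_s\, w_0\, F(w_0)}\;-\;\frac{w_1\,\tau_0}{i\, w_0\, C_s}\,G(w_0).
\]
Substituting these expressions into $A F^2(w_1) - (2Bw_0/w_1) F(w_1)G(w_1)$, the $F(w_0)^2$ and $F(w_0)G(w_0)$ contributions collect with common prefactor $(\tau_0/i)^2/C_s^2$ and reduce, via (\ref{FG1}), to
\[
\frac{(\tau_0/i)^2}{C_s^2}\bigl[A F^2(w_0) + 2B F(w_0) G(w_0)\bigr] \;=\; \frac{(\tau_0/i)^2\,C}{\pi\,C_s^2},
\]
while the inhomogeneous term produced by the $1/F(w_0)$ summand of $G(w_1)$ contributes exactly $-B(\tau_0/i)/(\pi C_s^2\,w_1)$. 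Summing these pieces gives $\hat C/\pi$ with $\hat C$ as in the theorem.

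The main obstacle is rigorously pinning down the constant $1/(iC_s)$ in $F(w_1)/F(w_0)$ for each of $s=1/2,1/3,1/4,1/6$. For $s=1/2$ this is the classical Legendre ratio $K(k')/K(k)$, matched to $(\tau_0/i)/C_s$ after tracking the relation between the paper's $\tau$ (defined through $t_4$) and the standard modular parameter of $K'/K$. For the other three signatures the analogous statements are Ramanujan's theories of elliptic functions of signatures $3$, $4$, and $6$, with $t_N(\tau)$ the Hauptmodul of the corresponding genus-zero congruence subgroup. In each case the constant of proportionality is verified by expanding both sides in the nome $q$ to leading order using the $\eta$-quotient (resp.\ $j$-function) definition of $t_N$, after which the rest of the argument is direct computation.
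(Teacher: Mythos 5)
Your argument is correct and arrives at exactly the right $\hat A,\hat B,\hat C$, but it reaches the key intermediate identity by a genuinely different route than the paper. Both proofs rest on the modular relation $\tau_0=iC_s F(w_1)/F(w_0)$ (formula (8) of [CWZ]) and on a bilinear identity expressing a combination of $w_0F(w_0)G(w_1)$ and $w_1F(w_1)G(w_0)$ in terms of $1/\pi$, followed by the same linear elimination against the hypothesis $AF^2(w_0)+2BF(w_0)G(w_0)=C/\pi$. The paper obtains its bilinear identity (its equation \eqref{FG2}) by taking the logarithmic $\tau$-derivative of $\tau=iC_sF(1-t)/F(t)$ and substituting the second modular input $q\,dt/dq=t(1-t)F^2(t)$, so that the $1/\pi$ emerges from $\frac{1}{2\pi i}\frac{d\tau}{\tau}$. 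You instead prove the equivalent statement $w_0F(w_0)G(w_1)+w_1F(w_1)G(w_0)=\sin(\pi s)/\pi$ directly as the Wronskian relation for the self-adjoint hypergeometric equation, with the constant fixed by the logarithmic connection formula at $t=1$ (for $s=1/2$ this is the classical Legendre relation). This buys you something: only one modular fact is needed rather than two, and the purely hypergeometric content is cleanly separated from the modular uniformization; indeed the paper's $q\,dt/dq=t(1-t)F^2(t)$ is itself a consequence of your Wronskian identity combined with the relation $\tau=iC_sF(1-t)/F(t)$. The cost is the same in both treatments: the normalization $F(w_1)/F(w_0)=\tau_0/(iC_s)$ for each of the four signatures is taken from [CWZ] in the paper, and only sketched (via $q$-expansion of the $\eta$-quotients) in your proposal, so neither is more complete on that point.
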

\begin{proof}
By formulas (8), (9) in \cite{CWZ} we have, for $\tau\in U$,
\begin{equation}
\tau = iC_s\frac{F(1-t)}{F(t)},
\label{fFricke}
\end{equation}
%where
%\begin{equation}
%\qquad C_s = \frac{1}{2\sin(\pi s)},
%\end{equation}
and
\begin{equation}
\frac{1}{2\pi i}\frac{dt}{d\tau} = q\frac{dt}{dq} = t(1-t)F^2(t),
\label{difft}
\end{equation}
where $q = e^{2\pi i\tau}$.
Take $\frac{1}{2\pi i}$ times the logarithmic derivative with respect 
to $\tau$ in \eqref{fFricke}:
\[
\frac{\frac{dF}{dt}(1-t)\cdot(-1)\cdot\frac{1}{2\pi i}\frac{dt}
{d\tau}}{F(1-t)} - \frac{\frac{dF}{dt}\frac{1}{2\pi i}\frac{dt}
{d\tau}}{F(t)} = \frac{1}{2\pi i\tau}.
\]
Substitute \eqref{difft} to obtain
\[
-\frac{G(1-t)tF^2(t)}{F(1-t)} -G(t)(1-t)F(t) = \frac{1}{2\pi i\tau}.
\]
Multiply by $2i\tau$ and substitute \eqref{fFricke}:
\begin{equation}
2C_s\,t\,G(1-t)F(t)+2\left(\frac{\tau}{i}\right)(1-t)G(t)F(t) = \frac{1}{\pi}.
\label{FGgen}
\end{equation}
Evaluate \eqref{FGgen} at $\tau = \tau_0$:
\begin{equation}
2C_s\,w_0\,G(w_1)F(w_0)+2\left(\frac{\tau_0}{i}\right)w_1G(w_0)F(w_0) 
= \frac{1}{\pi}.
\label{FG2}
\end{equation}
By assumption \eqref{FG1}
\begin{equation}
2BF(w_0)G(w_0) = \frac{C}{\pi} - AF^2(w_0),
\label{FG0}
\end{equation}
and eliminating $F(w_0)G(w_0)$ by \eqref{FG2}, \eqref{FG0} we have
\[
2BC_s\,w_0\,G(w_1)F(w_0)-A\left(\frac{\tau_0}{i}\right)w_1F^2(w_0) 
= \frac{1}{\pi}\left(B-C\left(\frac{\tau_0}{i}\right)w_1\right).
\]
Substitute $F(w_0)=\left(\frac{i}{\tau_0}\right)C_sF(w_1)$ from 
\eqref{fFricke} to obtain
\[
-A\left(\frac{i}{\tau_0}\right)C_s^2w_1F^2(w_1) 
+ 2B\left(\frac{i}{\tau_0}\right)C_s^2\,w_0\,F(w_1)G(w_1)
 = \frac{1}{\pi}\left(B-C\left(\frac{\tau_0}{i}\right)w_1\right).
\]
Dividing by $-\left(\frac{i}{\tau_0}\right)C_s^2w_1$  we get
\[
AF^2(w_1) - 2B\frac{w_0}{w_1}F(w_1)G(w_1)
 = \frac{1}{\pi}\left(\frac{C\left(\frac{\tau_0}{i}\right)^2}{C_s^2} 
- \frac{B\left(\frac{\tau_0}{i}\right)}{C_s^2w_1}\right),
\]
which is equivalent to \eqref{w1id}.
\end{proof}

{\bf Remark.} The arguments in the proof above are analogous to some 
arguments in the proof of \cite{CCL} Theorem 2.1.

Theorem 1 applies to all of the identities in the Tables 
following Proposition 7 where $x_0>0$. We present 4 examples of such.

\bigskip
{\bf {Example 1.}} In case $s=\frac14, \tau_0=i$ we have $w_0 = t_2(i) 
= \frac19$, and there is in \cite{Sun2}, (1.12) the identity
\begin{equation}
\sum_{n=0}^\infty (1+8n)A_n\frac{1}{9^n} = \frac{9}{2\pi},
\end{equation}
where
\begin{equation}
A_n = \frac{1}{64^n}\sum_{k=0}^n \binom{2k}{k}\binom{4k}{2k}\binom{2(n-k)}{n-k}
\binom{4(n-k)}{2(n-k)}.
\label{An4}
\end{equation}
 In this case 
\[
A=1,\quad B=8,\quad C=\frac92, \quad C_s = \frac{1}{\sqrt{2}},
\]
and we obtain
\[
w_1 = \frac89,
\]
\[
\hat A = 1,\quad \hat B = -1,\quad \hat C = -9, 
\]
\begin{equation}
\sum_{n=0}^\infty (1-n)A_n\left(\frac{8}{9}\right)^n = -\frac{9}{\pi},
\end{equation}
which proves \cite{Sun1}, (2.10).

\bigskip
{\bf {Example 2.}} In case $s=\frac14, \tau_0=\frac{\sqrt{58}}{2}i$ we have
\[
w_0 = t_2\left(\frac{\sqrt{58}}{2}i\right) 
= \frac12-\frac{910}{9801}\sqrt{29},
\]
and there is in Proposition 7, Table $s=\frac14$, the identity
\begin{equation}
\sum_{n=0}^\infty \left(\frac{2206\sqrt{2}}{9801}
+\frac{52780\sqrt{2}+9801\sqrt{58}}{19602}n\right)A_nw_0^n 
=  \frac{1}{\pi},
\end{equation}
where $A_n$ is as in \eqref{An4}.
In this case 
\[
C_s = \frac{1}{\sqrt{2}},\quad w_1 = \frac12 + \frac{910}{9801}\sqrt{29},
\]
and we obtain
\begin{equation}
\sum_{n=0}^\infty \left(\frac{2206\sqrt{2}}{9801}+\frac{52780\sqrt{2}
-9801\sqrt{58}}{19602}n\right)A_nw_1^n = -\frac{29}{\pi}.
\end{equation}

\bigskip
{\bf {Example 3.}} In case $s=\frac12, \tau_0=\frac{\sqrt{3}}{2}i$ we have
\[
w_0 = t_4\left(\frac{\sqrt{3}}{2}i\right) 
= \frac12-\frac{\sqrt{3}}{4},
\]
and there is in Proposition 7, Table $s=\frac12$, the identity
\begin{equation}
\sum_{n=0}^\infty \left(\frac14+\frac{3+2\sqrt{3}}{4}n\right)A_nw_0^n 
 = \frac{1}{\pi},
\end{equation}
where
\begin{equation}
A_n = \frac{1}{16^n}\sum_{k=0}^n \binom{2k}{k}^2\binom{2(n-k)}{n-k}^2.
\end{equation}
In this case 
\[
C_s = \frac{1}{2},\quad w_1 = \frac12 + \frac{\sqrt{3}}{4},
\]
and we obtain
\begin{equation}
\sum_{n=0}^\infty \left(\frac14+\frac{3-2\sqrt{3}}{4}n\right)A_nw_1^n 
= -\frac{3}{\pi}.
\end{equation}

\bigskip
{\bf {Example 4.}} In case $s=\frac16, \tau_0=\sqrt{7}i$ we have
\[
w_0 = t_1\left(\sqrt{7}i\right) 
= \frac12-\frac{171}{14450}\sqrt{1785},
\]
and there is in Proposition 7, Table $s=\frac16$ the identity
\begin{equation}
\sum_{n=0}^\infty (A+Bn)A_nw_0^n =  \frac{1}{\pi},
\end{equation}
where
\begin{equation}
\begin{split}
&A = \frac{144\sqrt{255}}{7225},\quad B 
= \sqrt{7} + \frac{1197\sqrt{255}}{7225},\\
&A_n = \frac{1}{432^n}\sum_{k=0}^n \binom{3k}{k}\binom{6k}{3k}
\binom{3(n-k)}{n-k}\binom{6(n-k)}{3(n-k)}.
\end{split}
\end{equation}
In this case 
\[
C_s = 1,\quad w_1 = \frac12+\frac{171}{14450}\sqrt{1785},
\]
and we obtain
\begin{equation}
\sum_{n=0}^\infty (\hat A + \hat B n)A_nw_1^n = -\frac{7}{\pi},
\end{equation}
where
\[
\hat A = A,\quad \hat B = -\sqrt{7} + \frac{1197\sqrt{255}}{7225}.
\]

\noindent
Centre for Mathematical Sciences\newline
Mathematics\newline
Lund University\newline
Box 118\newline
SE-22100 Lund\newline
Sweden

\medskip
\noindent
arnem@maths.lth.se

\end{document}